\newcommand{\Hbt}{\mathcal{H}}
\newcommand{\M}{\mathcal{M}}
\newcommand{\Mtilde}{\widetilde{\mathcal{M}}}
\newtheorem{theorem}{Theorem}[section]
\newtheorem{lemma}[theorem]{Lemma}
\newtheorem{cor}[theorem]{Corollary}
\newtheorem{prop}[theorem]{Proposition}
\theoremstyle{definition}
\newtheorem{definition}[theorem]{Definition}
\theoremstyle{remark}
\newtheorem{remark}[theorem]{Remark}
\numberwithin{equation}{section}
\begin{document}

\title{An alternative approach to weighted non-commutative Banach function spaces}


\author{Claud Steyn}
\address{DST-NRF CoE in Math. and Stat. Sci,\\ Unit for BMI,\\ Internal Box 209, School of Math. \& Stat. Sci.\\ NWU, PVT. BAG X6001, 2520 Potchefstroom\\ South Africa}
\curraddr{}
\email{29576393@NWU.ac.za}
\thanks{}


\subjclass[2010]{Primary 46L52, 46L51}

\date{}

\dedicatory{}

\begin{abstract} We use a weighted analogue of a trace to define a weighted non-commutative decreasing rearrangement and show it's relationship with the singular value function. We further show an alternative approach to constructing weighted non-commutative Banach function spaces using weighted non-commutative decreasing rearrangements and prove that this approach is equivalent to the original approach by Labuschagne and Majewski.\\
Key Words. weighted, operator algebras, decreasing rearrangement, Banach function spaces, measurable operators
\end{abstract}

\maketitle

\section{Introduction}

The theory of non-commutative Banach function spaces has become a well developed and important area in real analysis. The papers of Fack and Kosaki \cite{FK}, Dodds, Dodds and de Pagter \cite{DDdP1}, and, more recently, Labuschagne and Majewski \cite{LM} are of particular interest for the context of this paper.

Let $\M$ be a semi-finite von Neumann algebra equipped with a semifinite, faithful, normal trace $\tau$. We will denote the $\tau$-measurable operators as $\Mtilde$. We will follow Dodds, Dodds and de Pagter's prescription for constructing non-commutative Banach function spaces, as can be seen in \cite{DDdP1}. We define the non-commutative decreasing rearrangement of $a\in\Mtilde$ as the function\\ $\mu(a):(0,\infty)\mapsto(0,\infty]:t\mapsto \mu_t(a)$ where $\mu_t(a)=\inf\{\Vert ap\Vert:p\in\M_P,\text{ }\tau(1-p)\leq 1\}$. Then, for a Banach function norm $\rho$, the non-commutative Banach function space is given by $L^\rho(\Mtilde)=\{a\in\Mtilde:\mu(a)\in L^\rho(\mathbb{R}^+)\}$ with the norm given by $\Vert a\Vert_\rho = \rho(\mu(a))$. 

In \cite{LM} Labuschagne and Majewski expanded on this by introducing weighted non-commutative Banach function spaces. Suppose $x\in L^1(\Mtilde)$. In the context of \cite{LM} $x$ can be thought of as a state, so then $\tau(x)=1$, though we won't be making this assumption in the present article. For a given Banach function norm, but this time defined on $L^0(\mathbb{R}^+,\mu_t(x)dt)$, the weighted non-commutative Banach function spaces were defined to be the space of all those $a\in\M$ such that $\mu(a)\in L^\rho(\mathbb{R}^+,\mu_t(x)dt)$, with the norm given by $\Vert a\Vert_\rho = \rho(\mu(a))$ as before.

The authors went on to prove that the weighted non-commutative Banach function spaces are in fact Banach spaces that inject continuously into $\Mtilde$ \cite[Theorem 3.7]{LM}. Throughout the proof the map $\tau_x:\Mtilde\mapsto \mathbb{R}:a\mapsto \int_0^\infty\mu_t(a)\mu_t(x)dt$ was used implicitly. Labuschagne and Majewski showed that this map has all the properties of a finite normal faithful trace, with the exception of only being subadditive, i.e. $\tau_x(a+b)\leq\tau(a)+\tau(b)$ for all $a,b\in\Mtilde$.

The map $\tau_x$ will be one of the main foci of this article. Our first main objective will be to define the concept of a $\tau_x$-measurable operator and investigate the relationship between such operators and $\tau$-measurable operators.

Once we have established this relationship, we will define a weighted non-commutative decreasing rearrangement based on the map $\tau_x$. In \cite{LM} the authors assumed $x\in L^1(\Mtilde,\tau)$, but we will relax this assumption to $x\in L^1(\Mtilde,\tau)+\M$.
\\

In \cite{FK} Fack and Kosaki explored some of the properties of non-commutative decreasing rearrangements, in particular a number of convergence theorems and their relationship with $L^p$ norms. These results form some of the fundamentals of the theory of non-commutative decreasing rearrangements and are used extensively in \cite{DDdP1,DDdP2,DDdP3} by Dodds, Dodds and de Pagter in developing the theory of non-commutative Banach function spaces. 

Next we observe that \cite[Lemma 2.5]{FK} also holds for the weighted non-commutative decreasing rearrangements. We will also show the relationship between the weighted and ``unweighted'' non-commutative decreasing rearrangements, i.e. the singular value function. This relationship will be instrumental in our further results.
\\

In the final section we will take inspiration from \cite{DDdP1} and use the weighted non-commutative decreasing rearrangements to construct Banach function spaces. The main result of the article is that these spaces coincide with the weighted non-commutative Banach function spaces of Labuschagne and Majewski in \cite{LM}. A convex function $\psi:[0,\infty)\mapsto[0,\infty]$ which satisfies $\psi(0)=0$ and $\lim_{u\rightarrow\infty}\psi(u)=\infty$, is neither identically zero or infinitely valued on all of $(0,\infty)$ and is left continuous at $b_\psi = \{u>0:\psi(u)<\infty\}$, is referred to as an \emph{Orlicz function}. If $(X,\Sigma,m)$ is a $\sigma$-finite measure space, then the Orlicz space associated with $\psi$ is defined as the set
$$L^\psi=\{f\in L^0(X,\Sigma,m):\psi(\lambda\vert f\vert)\in L^1(X,\Sigma,m) \text{ for some } \lambda>0\}.$$ 
Under the Luxemburg-Nakano norm, given by
$$\Vert f\Vert_\psi = \inf\{\lambda>0:\Vert\psi\big(\tfrac{\vert f\vert}{\lambda}\big)\Vert_1\leq 1\},$$
an Orlicz space $L^\psi$ is a Banach function space. 

In statistical physics, entropy may not be well defined for every state $f\in L^1(X,\Sigma,m)$, especially in large systems. However, entropy is well defined for every $f\in L\log(L+1)$ and as such is an appropriate choice for the space containing the states of the system (see \cite{LM2} for details). The set of all regular random variables was shown to coincide with the subspace of $L^{\cosh-1}$ of zero expectation random variables \cite{PS}. The space $L^{\cosh-1}$ turns out to be an equivalent renorming of the K\"othe dual of $L\log(L+1)$. (See \cite{LM2})

In \cite{LM} Labuschagne and Majewski proved a non-commutative analogue of the Pistone-Sempi theorem, i.e. that the set of regular observables with respect to a state $x\in L^1(\Mtilde)$ agrees with the Orlicz space $L^{\cosh-1}_x(\Mtilde)$ (see \cite[Definition 3.5, Theorem 3.8]{LM}). 

In this article $\M$ will be a semi-finite von Neumann algebra equipped with a semi-finite normal faithful trace $\tau$. The lattice of projections will be denoted $\M_P$.

Given a self-adjoint operator $a\in\Mtilde$, the notation $e_B(a)$ will be used to denote the spectral projection of $a$ corresponding to the Borel set $B$. 

We will denote the Lebesgue measure by $m$ and the measure given by $\mu_t(x)dt$ by $\nu$.

\section{The map $\tau_x$}
Let $\M$ be a von Neumann algebra equipped with a semi-finite normal faithful trace $\tau$ and $0\neq x\in L^1(\Mtilde,\tau)+\M$. Recall that the map $\tau_x$ is defined by 
$$
\tau_x:\Mtilde\mapsto \mathbb{R}:a\mapsto \int_0^\infty\mu_t(a)\mu_t(x)dt.
$$

In \cite{LM} the authors assumed $x\in L^1(\Mtilde,\tau)$, but we will relax this assumption to that stated above. In the light of \cite[Proposition 2.6]{DDdP2}, the results obtained in \cite{LM}, in particular \cite[Theorem 3.7]{LM}, is still valid in this broader context. We will elaborate on the modifications needed in the proof of \cite[Theorem 3.7]{LM} for our broader context when they become necessary.

The map $\tau_x$ resembles a semi-finite, normal faithful trace. More specifically we have the following result from \cite[Proposition 3.10]{LM}.

\begin{prop} For any non-zero $0\leq x\in L^1(\Mtilde,\tau)+\M$, the map $\tau_x:\Mtilde\mapsto \mathbb{R}:a\mapsto \int_0^\infty\mu_t(a)\mu_t(x)dt$ has the following properties for all $0\leq a\in\Mtilde$:
\begin{enumerate}
\item $\tau_x$ is subadditive, homogeneous and satisfies $\tau_x(a^*a)=\tau_x(aa^*)$;
\item $\tau_x(a)=0$ implies $a=0$;
\item $\sup_n\tau_x(a_n)=\tau_x(a)$ for every sequence $\{a_n\}$ increasing to $a$ in $\Mtilde^+$.
\item if $x\in L^1(\Mtilde,\tau)$, then $\tau_x(1)<\infty$.
\end{enumerate}
\end{prop}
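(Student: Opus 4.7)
The plan is to adapt the proof of \cite[Proposition~3.10]{LM}, reducing each of the four assertions to a corresponding statement about the singular value function $\mu_t(\cdot)$ via the representation $\tau_x(a) = \int_0^\infty \mu_t(a)\mu_t(x)\,dt$. The only structural modification relative to \cite{LM} is to invoke \cite[Proposition~2.6]{DDdP2} so that, for $x \in L^1(\Mtilde,\tau) + \M$ rather than merely $x \in L^1(\Mtilde,\tau)$, the weight $\mu_t(x)$ remains a finite, non-negative, right-continuous decreasing function on $(0,\infty)$ to which the classical rearrangement machinery applies.

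Homogeneity is immediate from $\mu_t(\lambda a) = |\lambda|\mu_t(a)$, and $\tau_x(a^*a) = \tau_x(aa^*)$ follows from the standard identity $\mu_t(a^*a) = \mu_t(aa^*)$. For item~(2), if $0 \leq a \in \Mtilde$ satisfies $\tau_x(a) = 0$ then the non-negative function $t \mapsto \mu_t(a)\mu_t(x)$ vanishes almost everywhere on $(0,\infty)$; since $x \neq 0$ the decreasing function $\mu_t(x)$ is strictly positive on some interval $(0,t_0)$, forcing $\mu_t(a) = 0$ there, and monotonicity of $t \mapsto \mu_t(a)$ then gives $\mu_t(a) = 0$ for every $t > 0$, which is equivalent to $a = 0$. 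Item~(4) is the one-line bound $\tau_x(1) \leq \int_0^\infty \mu_t(x)\,dt = \Vert x\Vert_1 < \infty$, using $\mu_t(1) \leq 1$ together with the assumption $x \in L^1(\Mtilde,\tau)$.

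The two properties requiring genuine work are subadditivity and normality. For subadditivity I would combine the standard Ky Fan norm inequality $\int_0^s \mu_t(a+b)\,dt \leq \int_0^s \bigl(\mu_t(a)+\mu_t(b)\bigr)\,dt$ (valid for every $s > 0$, see \cite{FK}) with the Hardy--Littlewood--P\'olya rearrangement principle: if $f,g \geq 0$ are measurable with $\int_0^s f \leq \int_0^s g$ for all $s > 0$ and $h \geq 0$ is decreasing, then $\int_0^\infty fh \leq \int_0^\infty gh$. Taking $h = \mu_t(x)$ yields $\tau_x(a+b) \leq \tau_x(a) + \tau_x(b)$. For normality I would appeal to the Fack--Kosaki fact that $0 \leq a_n \uparrow a$ in $\Mtilde^+$ implies $\mu_t(a_n) \uparrow \mu_t(a)$ almost everywhere on $(0,\infty)$, and then apply the monotone convergence theorem to the sequence $\mu_t(a_n)\mu_t(x)$.

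The main obstacle I anticipate is verifying that these classical rearrangement tools continue to apply when $\mu_t(x)$ is no longer globally integrable on $(0,\infty)$, which can occur once $x$ is allowed to lie in $L^1(\Mtilde,\tau) + \M$ rather than $L^1(\Mtilde,\tau)$. The crucial observation is that $\mu_t(x)$ nevertheless remains finite, decreasing and right-continuous, which is all that the Hardy--Littlewood--P\'olya inequality and the monotone convergence step actually use; this is precisely the technical content supplied by \cite[Proposition~2.6]{DDdP2}, and its invocation is the only substantive departure from the original proof in \cite{LM}.
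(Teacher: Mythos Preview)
Your proposal is correct and matches the paper's approach: the paper does not give an independent proof but simply cites \cite[Proposition~3.10]{LM}, noting earlier that \cite[Proposition~2.6]{DDdP2} is precisely what is needed to pass from $x\in L^1(\Mtilde,\tau)$ to $x\in L^1(\Mtilde,\tau)+\M$. Your write-up fills in the details of that citation in the expected way (Ky Fan plus Hardy--Littlewood--P\'olya for subadditivity, monotone convergence for normality), so there is nothing further to add.
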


Many of the basic properties one can prove for a trace can also be proven for $\tau_x$, often by using the same proof, albeit with minor modifications where needed. The following two properties will be useful, and so we provide their proofs.

\begin{lemma}\label{lemequiv} If $p,q\in \M_P$ and $p\sim^v q$, then $\tau_x(p) = \tau_x(q)$.
\end{lemma}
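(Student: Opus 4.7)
The plan is to reduce the statement to part (1) of the preceding proposition, specifically the identity $\tau_x(a^*a) = \tau_x(aa^*)$. For a genuine trace, Murray--von Neumann equivalence immediately yields equality of the trace values, and the present situation is no different once we recognise that the only trace property actually used in that classical argument is precisely the $C^*$-identity, which $\tau_x$ does possess.

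More concretely, the notation $p \sim^v q$ means that $v \in \M$ is a partial isometry witnessing the Murray--von Neumann equivalence of $p$ and $q$; that is, $v^*v = p$ and $vv^* = q$. Applying part (1) of the preceding proposition to $a = v$ yields
\[
\tau_x(p) \;=\; \tau_x(v^*v) \;=\; \tau_x(vv^*) \;=\; \tau_x(q),
\]
which is the desired equality. One should of course verify that $v \in \M \subseteq \widetilde{\M}$ so that $\tau_x(v^*v)$ and $\tau_x(vv^*)$ make sense, but this is automatic since partial isometries in $\M$ are bounded (hence $\tau$-measurable) and both $v^*v$ and $vv^*$ are projections in $\M$, so they lie in the positive cone $\widetilde{\M}^+$ on which the proposition's properties were stated.

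There is essentially no obstacle here; the content of the lemma is entirely absorbed into the $C^*$-identity already established for $\tau_x$ in the preceding proposition. What is perhaps worth emphasising in the exposition is that one cannot imitate the alternative ``additive'' proof of trace-invariance under equivalence (which would, for instance, split a projection along a subprojection and use additivity of $\tau$ on the pieces), because $\tau_x$ is merely subadditive. This is the conceptual reason for routing the argument through the $C^*$-identity rather than through any decomposition argument.
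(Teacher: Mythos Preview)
Your proof is correct and is essentially identical to the paper's own argument: both apply the identity $\tau_x(a^*a)=\tau_x(aa^*)$ from the preceding proposition to the partial isometry $v$ witnessing $p\sim^v q$. Your additional remarks on why $v\in\M\subset\Mtilde$ and on the failure of additive decomposition arguments are accurate but go beyond what the paper records.
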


\begin{proof} $\tau_x(p) = \tau_x(v^*v) = \tau_x(vv^*) = \tau_x(q)$.
\end{proof}

The previous lemma is identical to the result in the tracial case. The proof is provided largely as a short demonstration of some of the technical similarities that $\tau_x$ shares with that of a trace. The next lemma, however, is not entirely identical. The reader may recall that the inequality that follows is in fact an equality when dealing with a trace. 

\begin{lemma}\label{lemorth} For $p,q\in\M$, if $p\wedge q=0$, then $\tau_x(p)\leq\tau_x(1-q)$.
\end{lemma}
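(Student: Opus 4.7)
The plan is to invoke the Kaplansky parallelogram formula for projections, which states that $p - (p\wedge q) \sim^v (p\vee q) - q$. Since by hypothesis $p\wedge q = 0$, this reduces to $p\sim^v (p\vee q) - q$, exhibiting $p$ as Murray--von Neumann equivalent to a subprojection of $1-q$ (namely $(p\vee q)-q$, which is a projection because $q\leq p\vee q$).

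Having this, I would apply Lemma \ref{lemequiv} to get
\[
\tau_x(p) = \tau_x\bigl((p\vee q)-q\bigr),
\]
and then reduce the problem to monotonicity of $\tau_x$ on positive operators: since $(p\vee q)-q \leq 1-q$ (both are projections with the first dominated by the second), I need $\tau_x\bigl((p\vee q)-q\bigr) \leq \tau_x(1-q)$. Monotonicity of $\tau_x$ is immediate from its definition, because if $0\leq a\leq b$ in $\Mtilde$ then $\mu_t(a)\leq \mu_t(b)$ for every $t>0$ by the standard properties of the singular value function, and integrating against the positive weight $\mu_t(x)\,dt$ preserves the inequality. Chaining the equality and the inequality gives $\tau_x(p)\leq\tau_x(1-q)$.

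The reason this is only an inequality (rather than the equality valid in the tracial case) is precisely the failure of additivity of $\tau_x$: if $\tau$ were a trace one could write $\tau(1-q) = \tau((p\vee q)-q) + \tau(1-p\vee q) = \tau(p) + \tau(1-p\vee q)$, but here the best one can say about the ``leftover'' piece $1 - p\vee q$ is that dropping it can only decrease $\tau_x$, via monotonicity.

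I do not expect a real obstacle: the Kaplansky formula is a purely algebraic statement about the projection lattice and requires no tracial input, so Lemma \ref{lemequiv} carries it through, and monotonicity of $\tau_x$ is built into the definition via $\mu_t$. The only mild subtlety worth flagging in the write-up is verifying that $(p\vee q)-q$ really is a projection (which uses $q\leq p\vee q$) so that Lemma \ref{lemequiv}, stated for projections, applies.
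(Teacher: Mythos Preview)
Your proof is correct and essentially identical to the paper's: both invoke the Kaplansky parallelogram law to exhibit $p$ as equivalent to a subprojection of $1-q$ (the paper writes it as $q^\perp - q^\perp\wedge p^\perp$, which equals your $(p\vee q)-q$), then apply Lemma~\ref{lemequiv} together with monotonicity of $\tau_x$. Your write-up is in fact a bit more explicit about the monotonicity step and the verification that $(p\vee q)-q$ is a projection, which the paper leaves implicit.
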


\begin{proof} If $p\wedge q=0$, then 
$p=1-p^\perp=(p\wedge q)^\perp - p^\perp = p^\perp \vee q^\perp -p^\perp \sim q^\perp -q^\perp\wedge p^\perp\leq q^\perp = 1-q.$
Therefore by Lemma \ref{lemequiv} $\tau_x(p)\leq \tau_x(1-q)$.
\end{proof}

Recall the definition of a $\tau$-measurable operator.

\begin{definition}\emph{ A closed operator $a$ affiliated with $\M$ is $\tau$-measurable if and only if for all $\delta>0$ there exists a projection $p\in \M$ such that $p\Hbt\subset D(a)$, $\Vert ap\Vert<\infty$ and $\tau(1-p)\leq \delta$.}
\end{definition}

By substituting $\tau_x$ into the role of the trace, we can define $\tau_x$-measurable operators.

\begin{definition}\emph{ A closed operator $a$ affiliated with $\M$ is $\tau_x$-measurable if and only if for all $\delta>0$ there exists a projection $p\in \M$ such that $p\Hbt\subset D(a)$, $\Vert ap\Vert<\infty$ and $\tau_x(1-p)\leq \delta$. We denote the set of all $\tau_x$ measurable operators $\Mtilde_x$.}
\end{definition}

In the proof of the next theorem we will rely on the fact that $\mu_t(x)$ is non-negative for all $t>0$.

\begin{theorem}\label{MisMx} An operator $a$ is $\tau$-measurable if and only if $a$ is $\tau_x$-measurable.
\end{theorem}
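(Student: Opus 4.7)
The plan is to reduce both implications to a single calculation of $\tau_x$ on a projection. Specifically, for any projection $q \in \M_P$, one has $\mu_t(q) = \chi_{[0,\tau(q))}(t)$, so
$$
\tau_x(q) = \int_0^{\tau(q)} \mu_t(x)\,dt.
$$
The whole argument then turns on two monotonicity/continuity statements about the increasing function $F(s) := \int_0^s \mu_t(x)\,dt$: (i) $F(s) \to 0$ as $s \to 0^+$, and (ii) $F(s) > 0$ for every $s > 0$ (since $x \neq 0$ implies $\mu_t(x) > 0$ on an interval $[0,t_0)$, and $\mu$ is non-negative and decreasing).

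For the forward direction ($\tau$-measurable $\Rightarrow$ $\tau_x$-measurable), fix $\delta > 0$ and I need to find $\eta > 0$ with $F(\eta) \leq \delta$. Write $x = x_1 + x_2$ with $x_1 \in L^1(\Mtilde,\tau)$ and $x_2 \in \M$, and use the standard estimate $\mu_t(x) \leq \mu_{t/2}(x_1) + \mu_{t/2}(x_2) \leq \mu_{t/2}(x_1) + \|x_2\|$; integrating gives $F(\eta) \leq 2\int_0^{\eta/2}\mu_s(x_1)\,ds + \eta\|x_2\|$, which tends to $0$ as $\eta \to 0$ because $\mu(x_1)\in L^1(\mathbb{R}^+)$. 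Choose $\eta$ with $F(\eta) \leq \delta$, apply the hypothesis to find a projection $p$ with $p\Hbt \subset D(a)$, $\|ap\|<\infty$, and $\tau(1-p)\leq \eta$; then $\tau_x(1-p) = F(\tau(1-p)) \leq F(\eta) \leq \delta$, as required.

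For the backward direction ($\tau_x$-measurable $\Rightarrow$ $\tau$-measurable), fix $\delta > 0$ and set $\eta = \tfrac{1}{2}F(\delta)$, which is strictly positive by (ii). By hypothesis, choose a projection $p$ with $p\Hbt \subset D(a)$, $\|ap\| < \infty$, and $\tau_x(1-p) \leq \eta$. I claim $\tau(1-p)\leq \delta$. Indeed, if $\tau(1-p) > \delta$ (including the case $\tau(1-p) = \infty$), then since $\mu_t(x) \geq 0$,
$$
\tau_x(1-p) = \int_0^{\tau(1-p)}\mu_t(x)\,dt \geq \int_0^{\delta}\mu_t(x)\,dt = F(\delta) = 2\eta > \eta,
$$
contradicting the choice of $p$.

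The substantive content is entirely in steps (i) and (ii): the decomposition $L^1 + \M$ is exactly what guarantees (i), and the non-triviality of $x$ together with the non-negativity of $\mu_t(x)$ (the fact the author explicitly flags just before the theorem) is exactly what guarantees (ii). I don't anticipate a serious obstacle; the only delicate point is remembering to handle $\tau(1-p)=\infty$ in the backward direction, which is dealt with uniformly by the inequality above.
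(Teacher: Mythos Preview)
Your proof is correct and follows essentially the same approach as the paper: both reduce everything to the formula $\tau_x(1-p)=F(\tau(1-p))$ with $F(s)=\int_0^s\mu_t(x)\,dt$ and then exploit that $F$ is continuous at $0$ with $F(0)=0$ (forward direction) and strictly positive on $(0,\infty)$ (backward direction). The only cosmetic differences are that you prove $F(s)\to 0$ by an explicit $L^1+\M$ estimate where the paper cites \cite[Proposition~2.6]{DDdP2}, and your backward direction picks a single $\eta=\tfrac12 F(\delta)$ and one projection, whereas the paper runs a sequence $\tau_x(1-p_n)\leq 2^{-n}$ and argues by contradiction that $\inf_n\tau(1-p_n)=0$.
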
	

\begin{proof} Suppose $a\in \Mtilde$ and let $\delta>0$. Since $x\in L^1(\Mtilde)+\M$ we have that $\int_0^t\mu_s(x)ds<\infty$ for every $t>0$ \cite[Proposition 2.6]{DDdP2}. As a function of $t$, the quantity $\int_0^t\mu_s(x)ds$ is continuous, increasing and $\int_0^0\mu_s(x)ds=0$. Therefore we can find an $\epsilon>0$ such that $\int_0^\epsilon\mu_s(x)ds\leq \delta$.

 Since $a\in\Mtilde$ there exists $p\in\M_P$ such that $p\Hbt \subset D(a)$, $\Vert ap\Vert<\infty$ and \\$\tau(1-p)\leq \epsilon$. Then 
\begin{eqnarray*}
\tau_x(1-p)	& = & \int_0^{\tau(1-p)}\mu_s(x)ds
\\			& \leq & \int_0^\epsilon\mu_s(x)ds
\\			& \leq & \delta.
\end{eqnarray*}

It follows that $a$ is $\tau_x$ measurable and hence $\Mtilde \subset \Mtilde_x$.

 Suppose that $a\in \Mtilde_x$ and let $\delta>0$. For each $n\in \mathbb{N}$, let $\epsilon_n=\frac{1}{2^n}$ and then let $p_n$ be a projection such that $p_n\Hbt\subset D(a)$, $\Vert ap_n\Vert<\infty$ and $\tau_x(1-p_n)\leq \frac{1}{2^n}$. Clearly $\tau_x(1-p_n)\downarrow 0$ as $n\rightarrow \infty$.

If there exists an $n$ such that $\tau(1-p_n) = 0$, then by the faithfulness of $\tau$, $p_n = 1$ and $a\in \M\subset\Mtilde$. So we may assume $\tau(1-p_n)>0$ for all $n\in \mathbb{N}$.

As is pointed out in the proof of \cite[Theorem 3.7]{LM}, the measure $\nu=\mu_t(x)dt$ is mutually absolutely continuous to the Lebesgue measure on the interval $[0,t_x)$, when $t_x = \inf\{t>0:\mu_t(x)=0\}<\infty$, or on $[0,\infty)$ when $\mu_t(x)>0$ for all $t>0$. Since we assumed that $\tau(1-p_n)>0$ for each $n\in\mathbb{N}$, it follows that $\int_0^{\tau(1-p_n)}\mu_t(x)dt>0$ for all $n\in\mathbb{N}$.

 Now suppose $\tau(1-p_n)\geq k > 0$ for some $k>0$ and all $n\in\mathbb{N}$. Then
$$\tau_x(1-p_n) = \int_0^{\tau(1-p_n)}\mu_s(x)ds \geq \int_0^k\mu_s(x)ds = \beta >0$$ 
for all $n >0$. Thus we have a contradiction. It follows that $\inf\{\tau(1-p_n):n\in\mathbb{N}\}=0$. And thus there exists an $n$ such that $p_n\Hbt\subset D(a)$, $\Vert ap_n\Vert<\infty$ and \\$\tau(1-p_n)\leq \delta$. Therefore we have that $\Mtilde = \Mtilde_x$.
\end{proof}

It is therefore not necessary to refer to $\Mtilde_x$.

\begin{remark} Now $\tau_x$ is a positive, unitarily invariant functional on $\M$, but is not a trace, or even a weight in the sense of von Neumann algebras.

Let $\M = L^\infty([0,2],dt)$ be equipped with the usual trace and $x=f(t)=\exp(-t)$. Note that $\mu_t(x) = \exp(-t)$.

Then $\tau_x(\chi_{[0,2)}) = 1-e^{-2} < 2(1-e^{-1})=\tau_x(\chi_{[0,1)})+\tau_x(\chi_{[1,2)})$, and therefore $\tau_x$ does not induce a measure on $[0,\infty)$ in the same way that $\tau$ does. $\tau_x$ is nevertheless close enough to being a trace to ensure that we may construct very well behaved spaces using this functional.
\end{remark}

\section{Weighted non-commutative decreasing rearrangements}

To define the weighted non-commutative decreasing rearrangement of an operator $a\in\Mtilde$, we use the approach of Fack and Kosaki in \cite{FK}. Many of the proofs in this section are similar to the corresponding ones found in \cite{FK}. In the cases where our new context does not change the proof of a statement to any appreciable degree, we will omit the proof.

\begin{definition}\label{wdr}\emph{For $a\in \Mtilde$, we define the function $\mu(a,x):[0,\infty)\mapsto [0,\infty]:t\mapsto\mu_t(a,x)$ by 
$$
\mu_t(a,x)=\inf\{\Vert ae\Vert: e\in\M_P, \tau_x(1-e)\leq t\}.
$$
We also define $d(a,x):[0,\infty)\mapsto [0,\infty]:t\mapsto d_t(a,x)$ by 
$$
d_t(a,x)=\tau_x(e_{(t,\infty)}(\vert a\vert)).
$$}
\end{definition}

\begin{lemma} $d(a,x)$ is decreasing.
\end{lemma}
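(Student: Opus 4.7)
The plan is straightforward: reduce the claim to the monotonicity of $\tau_x$ on projections, and then deduce that monotonicity from the explicit integral representation $\tau_x(p)=\int_0^{\tau(p)}\mu_s(x)\,ds$ that was already used in the proof of Theorem \ref{MisMx}.

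First I would fix $0\leq s\leq t$ and observe that $(t,\infty)\subset(s,\infty)$, so the spectral calculus for the self-adjoint operator $|a|$ gives the projection inequality
$$e_{(t,\infty)}(|a|)\leq e_{(s,\infty)}(|a|).$$
Hence it suffices to show that $p\leq q$ in $\M_P$ implies $\tau_x(p)\leq\tau_x(q)$.

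For the monotonicity step I would use that for any projection $p\in\M_P$ the singular value function is $\mu_r(p)=\chi_{[0,\tau(p))}(r)$, so
$$\tau_x(p)=\int_0^\infty \mu_r(p)\,\mu_r(x)\,dr=\int_0^{\tau(p)}\mu_r(x)\,dr.$$
Since $p\leq q$ entails $\tau(p)\leq\tau(q)$ and $\mu_r(x)\geq 0$ for all $r>0$, the integral over $[0,\tau(p))$ is bounded above by the integral over $[0,\tau(q))$, giving $\tau_x(p)\leq\tau_x(q)$. Applied to the spectral projections above, this yields $d_t(a,x)\leq d_s(a,x)$, which is the desired monotonicity.

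There is no real obstacle here; the only point to double-check is that the integral representation of $\tau_x$ on projections is available in our setting, but this was implicitly invoked in the proof of Theorem \ref{MisMx} and relies only on the formula for $\mu_r$ of a projection together with non-negativity of $\mu_r(x)$.
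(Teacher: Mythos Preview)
Your proposal is correct and follows essentially the same approach as the paper: both reduce the claim to the monotonicity of $\tau_x$ applied to the spectral projections $e_{(t,\infty)}(|a|)$. The only difference is that the paper simply invokes ``the monotonicity of $\tau_x$'' as a known fact, whereas you supply an explicit justification for projections via the integral formula $\tau_x(p)=\int_0^{\tau(p)}\mu_r(x)\,dr$; this extra detail is fine but not strictly needed, since monotonicity of $\tau_x$ on $\Mtilde^+$ follows directly from the standard fact that $0\leq a\leq b$ implies $\mu_r(a)\leq\mu_r(b)$.
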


\begin{proof} Let $t_1 \leq t_2$. Then $e_{(t_1,\infty)}(\vert T\vert) \geq e_{(t_2,\infty)}(\vert T\vert)$.
 
 By the monotonicity of $\tau_x$, it follows that 
 $$d_{t_1}(T,x) = \tau_x(e_{(t_1,\infty)}(\vert T\vert)) \geq \tau_x(e_{(t_2,\infty)}(\vert T\vert)) = d_{t_2}(T,x).$$
\end{proof}

\begin{lemma} $d(a,x)$ is right continuous.
\end{lemma}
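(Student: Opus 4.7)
The plan is to mimic the classical right-continuity argument for the unweighted distribution function $d_t(a)=\tau(e_{(t,\infty)}(|a|))$, substituting the normality-type property of $\tau_x$ (item (3) of the proposition) for the usual normality of the trace.

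First I would fix $t\geq 0$ and choose an arbitrary sequence $t_n\downarrow t$ with $t_n>t$. The key observation is purely spectral-theoretic: since the intervals $(t_n,\infty)$ form an increasing family whose union is $(t,\infty)$, the corresponding spectral projections of $|a|$ satisfy
$$
e_{(t_n,\infty)}(|a|)\;\uparrow\;e_{(t,\infty)}(|a|)
$$
in the strong operator topology (equivalently, as an increasing net in $\M_P$ with the indicated supremum). This needs only the functional calculus applied to $|a|\in\Mtilde$ and does not involve $\tau_x$ at all.

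Next I would invoke property (3) of the proposition recalled earlier, which states that $\tau_x$ is normal on $\Mtilde^+$: for any sequence increasing to $a\in\Mtilde^+$ one has $\sup_n\tau_x(a_n)=\tau_x(a)$. Applied to the increasing sequence of projections above, this yields
$$
\lim_{n\to\infty} d_{t_n}(a,x)=\lim_{n\to\infty}\tau_x(e_{(t_n,\infty)}(|a|))=\tau_x(e_{(t,\infty)}(|a|))=d_t(a,x).
$$
Since $d(a,x)$ is decreasing (by the previous lemma), convergence along one sequence $t_n\downarrow t$ forces $\lim_{s\downarrow t}d_s(a,x)=d_t(a,x)$, which is exactly right continuity.

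I do not anticipate a serious obstacle here: the only non-tracial ingredient used is normality, which is already packaged into the proposition. The proof is essentially identical to the classical one in \cite{FK}, with $\tau$ replaced by $\tau_x$ and the normality of $\tau$ replaced by property (3) of $\tau_x$. If anything, one small point to mention for cleanliness is that when $d_t(a,x)=\infty$, the same monotone-convergence statement still gives $d_{t_n}(a,x)\to\infty$, so right continuity holds in the extended-real sense as well.
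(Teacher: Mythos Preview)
Your proof is correct and follows essentially the same route as the paper: both argue that $e_{(t_n,\infty)}(|a|)\uparrow e_{(t,\infty)}(|a|)$ as $t_n\downarrow t$ and then invoke the normality of $\tau_x$ (property (3) of the proposition) to pass to the limit. If anything, your version is slightly more careful in justifying why convergence along one monotone sequence suffices and in noting the extended-real case.
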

\begin{proof} Suppose $t_i \downarrow t$. Then $e_{t_i}(\vert a\vert)\downarrow_{SO} e_t(\vert a\vert)$ in the strong operator topology. So $e_{(t_i,\infty)}(\vert a\vert) \uparrow e_{(t,\infty)}(\vert a\vert)$. Since $\tau_x$ is normal, it follows that \\
$d_{t_i}(S,x) = \tau_x(e_{(t_i,\infty)}(\vert a\vert)) \uparrow \tau_x(e_{(t,\infty)}(\vert a\vert)) = d_{t}(a,x)$.  
\end{proof}

Proposition \ref{muinf} and Lemmas \ref{muinf2}  and \ref{list} are proved in \cite{FK} for the tracial case. In our context the proofs remain virtually unchanged and so we omit them.

\begin{prop}\label{muinf} For $a\in\Mtilde$, we have that $\mu_t(a,x) = \inf\{s\geq 0 : d_s(a,x)\leq t\}$. Moreover $\mu_t(a,x)$ is non-increasing and right continuous. Also $d_{\mu_t(a,x)}(a,x)\leq t$ for all $t\geq0$.
\end{prop}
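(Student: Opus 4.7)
The plan is to follow the Fack–Kosaki template, establishing the identification of $\mu_t(a,x)$ with the distributional inverse of $d_s(a,x)$ first, and then deducing the monotonicity, right continuity and the final bound as easy consequences together with the right continuity of $d(a,x)$ that was already proved.

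For the main equality $\mu_t(a,x) = \inf\{s \geq 0 : d_s(a,x) \leq t\}$ I would split into two inequalities. Write $N_t$ for the right-hand side. For $\mu_t(a,x) \leq N_t$, fix any $s > 0$ with $d_s(a,x) \leq t$ and let $e = e_{[0,s]}(|a|) = 1 - e_{(s,\infty)}(|a|)$. Then $\tau_x(1-e) = d_s(a,x) \leq t$ and the spectral calculus gives $\|ae\|^2 = \|e|a|^2 e\| \leq s^2$, so $e$ is admissible in Definition \ref{wdr} and $\mu_t(a,x) \leq s$; taking the infimum over such $s$ gives $\mu_t(a,x) \leq N_t$. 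The harder direction is $\mu_t(a,x) \geq N_t$, which is where Lemma \ref{lemorth} is needed and, I expect, will be the main obstacle. Fix any projection $e \in \M_P$ with $\tau_x(1-e) \leq t$ and put $f = e_{(\|ae\|,\infty)}(|a|)$. I would argue $f \wedge e = 0$: if $\xi \in (f \wedge e)\Hbt$ is nonzero, then on the one hand $\|a\xi\| = \|ae\xi\| \leq \|ae\|\,\|\xi\|$, while on the other hand the functional calculus on the range of $f$ forces $\langle |a|^2\xi,\xi\rangle > \|ae\|^2\|\xi\|^2$, a contradiction. Lemma \ref{lemorth} then yields $d_{\|ae\|}(a,x) = \tau_x(f) \leq \tau_x(1-e) \leq t$, hence $N_t \leq \|ae\|$, and taking the infimum over $e$ gives $N_t \leq \mu_t(a,x)$.

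Monotonicity of $\mu_t(a,x)$ is immediate from the original definition, since enlarging $t$ enlarges the set of admissible projections. Right continuity I would derive from the characterization via $N_t$. Let $t_n \downarrow t$. Monotonicity gives $\mu_{t_n}(a,x) \leq \mu_t(a,x)$, so $L = \lim_n \mu_{t_n}(a,x) \leq \mu_t(a,x)$. If $L < \mu_t(a,x)$, pick $L < s < \mu_t(a,x)$; then $s > \mu_{t_n}(a,x) = N_{t_n}$ for every $n$, so by the definition of $N_{t_n}$ we can find $s_n \in [N_{t_n}, s]$ with $d_{s_n}(a,x) \leq t_n$, and the right continuity (in fact monotonicity) of $d(a,x)$ yields $d_s(a,x) \leq d_{s_n}(a,x) \leq t_n$; letting $n \to \infty$ gives $d_s(a,x) \leq t$, hence $\mu_t(a,x) \leq s$, contradicting $s < \mu_t(a,x)$.

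Finally, for $d_{\mu_t(a,x)}(a,x) \leq t$, if $\mu_t(a,x) = \infty$ the claim is vacuous, and otherwise I choose a sequence $s_n \downarrow \mu_t(a,x)$ with $s_n > \mu_t(a,x)$ and $d_{s_n}(a,x) \leq t$; such a sequence exists by the infimum characterization. The right continuity of $d(a,x)$ established in the preceding lemma then gives
\[
d_{\mu_t(a,x)}(a,x) = \lim_{n\to\infty} d_{s_n}(a,x) \leq t,
\]
which completes the proof.
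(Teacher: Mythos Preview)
Your proof is correct and is precisely the Fack--Kosaki argument that the paper invokes; the paper in fact omits the proof, remarking that the tracial argument from \cite{FK} goes through virtually unchanged, and your write-up makes explicit the one place where a genuine modification is needed---namely the use of Lemma~\ref{lemorth} in place of the tracial identity to pass from $f\wedge e=0$ to $\tau_x(f)\le\tau_x(1-e)$. Nothing further is required.
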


\begin{lemma}\label{muinf2} For each $t\geq 0$, let $R_t(x)$ be the set of all $\tau$-measurable operators $b$ such that $\tau_x(\sup(\vert b\vert))\leq t$. 
For $a\in \Mtilde$, we have that 
$$\mu_t(a,x) = \inf\{\Vert a - b\Vert: b\in R_t(x)\}.$$
\end{lemma}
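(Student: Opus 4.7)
The plan is to prove the two inequalities separately, in direct analogy with the tracial argument of Fack--Kosaki \cite[Lemma 2.5]{FK}, with $\tau_x$ in the role of $\tau$. Two ingredients are needed beyond the Fack--Kosaki template: that $\tau_x$ is monotone on projections, and that $\Mtilde$ is closed under strong products with elements of $\M$. The former holds because for projections $p\le q$ one has $\mu_s(p)\le\mu_s(q)$ pointwise, so $\tau_x(p)=\int_0^\infty\mu_s(p)\mu_s(x)\,ds\le\tau_x(q)$, using $\mu_s(x)\ge 0$. The latter is the standard fact that products of $\tau$-measurable operators with bounded operators are again $\tau$-measurable.

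For the inequality $\mu_t(a,x)\le\inf\{\|a-b\|:b\in R_t(x)\}$, I would fix $b\in R_t(x)$ and let $q$ denote the support projection of $|b|$, which coincides with the right support of $b$. Then $bq=b$, so $b(1-q)=0$, and with $e:=1-q$ we have $\tau_x(1-e)=\tau_x(q)\le t$, so that $e$ is admissible in Definition \ref{wdr}. Since $ae=(a-b)e+be=(a-b)e$, we obtain $\|ae\|\le\|a-b\|$, and hence $\mu_t(a,x)\le\|a-b\|$; taking the infimum over $b\in R_t(x)$ gives the claim.

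For the reverse inequality, fix an admissible projection $e\in\M_P$ with $\tau_x(1-e)\le t$ and set $b:=a(1-e)$, which lies in $\Mtilde$ by the closure property noted above. From $b^*b=(1-e)a^*a(1-e)$ one sees that $b^*b\cdot e=0$, so the support projection of $|b|$ is dominated by $1-e$; the monotonicity of $\tau_x$ on projections then yields $\tau_x(\mathrm{supp}(|b|))\le\tau_x(1-e)\le t$, i.e.\ $b\in R_t(x)$. Since $a-b=ae$, the infimum over $R_t(x)$ is at most $\|ae\|$; taking the infimum over admissible $e$ yields the reverse inequality.

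The argument presents no serious obstacle; the only step that deserves a pause is verifying that $\tau_x$ is monotone on projections (rather than merely subadditive), which is essential for the support-projection estimate in the second half. Everything else transfers verbatim from the tracial case, which is presumably why the author chose to omit the proof.
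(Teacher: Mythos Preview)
Your proof is correct and reproduces precisely the Fack--Kosaki argument that the paper cites and omits, with $\tau_x$ in place of $\tau$. The explicit check that $\tau_x$ is monotone on projections is the one place where the subadditivity of $\tau_x$ might have caused trouble, and you handle it correctly.
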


\begin{lemma}\label{list} For $a\in\Mtilde$ and $x\in L^1(\Mtilde)$,

\begin{enumerate}

\item $\lim_{t\downarrow 0}\mu_t(a,x) = \Vert x \Vert$.
\item $\mu_t(a,x) = \mu_t(\vert a\vert, x) = \mu_t(a^*,x)$ and $\mu_t(\lambda a,x) = \vert\lambda\vert\mu_t(a,x)$.
\item $\mu_{t+s}(a+b,x) \leq \mu_t(a,x) + \mu_s(b,x)$.
\item $\mu_{t+s}(ab,x) \leq \mu_t(a,x)\mu_s(b,x)$.

\end{enumerate}
\end{lemma}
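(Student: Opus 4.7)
The plan is to mirror the Fack--Kosaki approach from \cite{FK}, modifying each step where additivity of $\tau$ must be replaced by subadditivity of $\tau_x$. The crucial auxiliary facts are Lemma \ref{lemequiv} and the monotonicity of $\tau_x$ on projections, which is free since for any $r\in\M_P$ one has $\tau_x(r)=\int_0^{\tau(r)}\mu_s(x)\,ds$ and $\mu_s(x)\geq 0$. Parts (1) and (2) are then essentially formal: for (1) the right-continuity from Proposition \ref{muinf} together with faithfulness of $\tau_x$ (which forces the only projection with $\tau_x(1-e)=0$ to be $e=1$) gives $\lim_{t\downarrow 0}\mu_t(a,x)=\mu_0(a,x)=\|a\|$, with the value $\infty$ understood if $a\notin\M$. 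For (2) the homogeneity and the identity $\mu_t(a,x)=\mu_t(|a|,x)$ come directly from $\|\lambda ae\|=|\lambda|\|ae\|$ and $\|ae\|^2=\|ea^*ae\|=\||a|e\|^2$; for $\mu_t(a,x)=\mu_t(a^*,x)$ I would instead verify $d_s(a,x)=d_s(a^*,x)$, using that the partial isometry in the polar decomposition $a=u|a|$ implements a Murray--von Neumann equivalence $e_{(s,\infty)}(|a|)\sim e_{(s,\infty)}(|a^*|)$ for every $s>0$, and then invoking Lemma \ref{lemequiv} and Proposition \ref{muinf}.

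For (3), fix $\epsilon>0$ and pick near-optimal projections $e_1,e_2\in\M_P$ with $\tau_x(1-e_1)\leq t$, $\tau_x(1-e_2)\leq s$, $\|ae_1\|<\mu_t(a,x)+\epsilon$ and $\|be_2\|<\mu_s(b,x)+\epsilon$. Set $g=e_1\wedge e_2$, so that $1-g=(1-e_1)\vee(1-e_2)$. Combining the Kaplansky identity $(1-e_1)\vee(1-e_2)-(1-e_1)\sim (1-e_2)-(1-e_1)\wedge(1-e_2)$ with Lemma \ref{lemequiv}, the monotonicity above, and the subadditivity of $\tau_x$ yields $\tau_x(1-g)\leq \tau_x(1-e_1)+\tau_x(1-e_2)\leq t+s$. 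Since $g\leq e_1,e_2$ one has $\|(a+b)g\|\leq\|ae_1\|+\|be_2\|$, and letting $\epsilon\to 0$ closes (3).

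Part (4) is the most delicate. With $e_1,e_2$ as above, the idea is to manufacture a projection $g\leq e_2$ for which $bg$ lies in the range of $e_1$, so that $abg=ae_1bg$ and hence $\|abg\|\leq\|ae_1\|\,\|bg\|\leq\|ae_1\|\,\|be_2\|$. To produce such a $g$, put $h=(1-e_1)be_2$, which is bounded since $\|be_2\|<\infty$, let $p$ be the range projection of $h^*$, note that $p\leq e_2$, and take $g=e_2-p$. A short verification shows $hg=0$, i.e.\ $(1-e_1)bg=0$. To control $\tau_x(1-g)$ I would write $1-g=(1-e_2)+p$ as an orthogonal sum of projections and apply subadditivity, together with the polar-decomposition equivalence $p=r(h^*)\sim r(h)\leq 1-e_1$, Lemma \ref{lemequiv}, and the monotonicity of $\tau_x$ on projections, to reach $\tau_x(1-g)\leq\tau_x(1-e_2)+\tau_x(1-e_1)\leq s+t$, giving $\mu_{t+s}(ab,x)\leq\|ae_1\|\,\|be_2\|$ and then (4) on sending $\epsilon\to 0$. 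The main obstacle throughout is precisely this: because $\tau_x$ is merely subadditive, every bookkeeping step involving two projections must either be decomposed into an orthogonal sum or reduced, via Kaplansky or polar decomposition, to a situation where only subadditivity, monotonicity on projections, and Lemma \ref{lemequiv} are invoked.
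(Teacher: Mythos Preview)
Your proposal is correct and follows exactly the approach the paper itself indicates: the paper omits the proof entirely, merely stating that the Fack--Kosaki arguments from \cite{FK} ``remain virtually unchanged'' in the $\tau_x$ setting, and your write-up supplies precisely those details, with the subadditivity of $\tau_x$, Lemma~\ref{lemequiv}, and monotonicity on projections standing in for the additivity of $\tau$ at each bookkeeping step. Note that in part~(1) the quantity $\Vert x\Vert$ in the displayed statement is a typographical slip for $\Vert a\Vert$, which is what you correctly prove.
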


The following result gives us the relationship between $\mu(a,x)$ and the non-commutative decreasing rearrangement $\mu(a)$. This will play an important role in achieving the further results presented in this article.

\begin{theorem}\label{rearangements} Let $a\in\Mtilde$ and consider $\mu_t(a)\in L^0([0,\infty),\nu)$, where $\nu$ is the measure given by $\mu_t(x)dt$. Then the decreasing rearrangement of $\mu(a)$ with respect to $\nu$ is $\mu(a,x)$.
\end{theorem}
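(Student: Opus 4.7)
My plan is to reduce the statement to the identity $d_s(a,x) = \nu(\{t : \mu_t(a) > s\})$ for every $s \geq 0$, and then invoke Proposition \ref{muinf}. Recall that the decreasing rearrangement of $\mu(a) \in L^0([0,\infty),\nu)$ is by definition
\[
[\mu(a)]^*_\nu(t) = \inf\{s \geq 0 : \nu(\{\sigma : \mu_\sigma(a) > s\}) \leq t\},
\]
and Proposition \ref{muinf} gives $\mu_t(a,x) = \inf\{s \geq 0 : d_s(a,x) \leq t\}$. Once the distribution-function identity above is established, comparing these two infima immediately yields $[\mu(a)]^*_\nu = \mu(a,x)$.

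To prove the distribution-function identity, I would start from the definition $d_s(a,x) = \tau_x(e_{(s,\infty)}(|a|))$ and compute $\tau_x$ on the projection $p := e_{(s,\infty)}(|a|)$. Since $\mu_t(p) = \chi_{[0,\tau(p))}(t)$ for a projection $p$, the defining integral of $\tau_x$ gives
\[
\tau_x(p) = \int_0^{\tau(p)} \mu_u(x)\, du.
\]
Applying this with $\tau(p) = \tau(e_{(s,\infty)}(|a|)) = d_s(a)$ (the unweighted distribution function of $a$) yields
\[
d_s(a,x) = \int_0^{d_s(a)} \mu_u(x)\, du.
\]

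Next I would identify this with $\nu(\{\sigma : \mu_\sigma(a) > s\})$. Because $\mu(a)$ is non-increasing and right-continuous, the superlevel set $E_s := \{\sigma : \mu_\sigma(a) > s\}$ is an interval of the form $[0, L_s)$ for some $L_s \geq 0$. By the standard Fack--Kosaki identity $m(E_s) = d_s(a)$, so $L_s = d_s(a)$, whence
\[
\nu(E_s) = \int_0^{d_s(a)} \mu_u(x)\, du = d_s(a,x),
\]
which is exactly what I needed.

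The only subtle point I expect is making sure the superlevel set really is the half-open interval $[0, d_s(a))$ without an ambiguity at the endpoint; right-continuity and monotonicity of $\mu(a)$ handle this, as does the possibility $d_s(a) = \infty$, in which case both sides are $\nu([0,\infty))$. Apart from this bookkeeping the argument is a direct unwinding of definitions, so no genuine obstacle is anticipated.
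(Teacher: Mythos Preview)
Your proposal is correct and follows essentially the same route as the paper: both arguments establish the distribution-function identity $d_s(a,x)=\nu(\{\sigma:\mu_\sigma(a)>s\})$ by computing each side as $\int_0^{d_s(a)}\mu_u(x)\,du$ (using $\mu_t(p)=\chi_{[0,\tau(p))}$ for the projection $p=e_{(s,\infty)}(|a|)$ and the fact that the superlevel set of the decreasing function $\mu(a)$ is the interval $[0,d_s(a))$), and then conclude via Proposition~\ref{muinf}. Your explicit remark on the endpoint and the case $d_s(a)=\infty$ is a small refinement over the paper's presentation, but the underlying argument is the same.
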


\begin{proof} We denote the distribution function of a function $f\in L^0([0,\infty),\nu)$ with respect to $\nu$ by $d(f,\nu)$ and the decreasing rearrangement with respect to $\nu$ by $\mu(f,\nu)$. We will calculate $\mu(T,x)$ and $\mu_t(\mu(T),\nu)$ using the prescriptions in Lemma \ref{muinf} and \cite[Definition 2.1.5]{BS} respectively.

It is well known that $d_t(\mu(\vert a\vert)) = d_t(\vert a\vert)$. Since $\mu(a)$ is decreasing and therefore $\chi_{(t,\infty)}(\mu(a))=\chi_{[0,d_t(\mu(\vert a\vert)))}$, it follows that 
\begin{align*}
d_t(\mu(a),\nu)	& =  \nu(\chi_{(t,\infty)}(\mu(a)))
\\				& =  \int_0^\infty\chi_{(t,\infty)}(\mu(a))(s)\mu_s(x)ds
\\				& =  \int_0^\infty\chi_{[0,d_t(\mu(\vert a\vert)))}(s)\mu_s(x)ds
\\				& =  \int_0^{d_t(\mu(\vert a\vert))}\mu_s(x)ds
\\				& =  \int_0^{d_t(\vert a\vert)}\mu_s(x)ds
\\				& =  \int_0^\infty \chi_{[0,\tau(e_{(t,\infty)}(\vert a\vert))}(s)\mu_s(x)ds
\\				& =  d_t(a,x).
\end{align*}
By using Proposition \ref{muinf} to calculate $\mu_t(a,x)$ and \cite[Definition 2.1.5]{BS} to calculate $\mu(\mu(a),\nu)$, it is clear that $\mu_t(a,x)=\mu(\mu(a),\nu)$.
\end{proof}

From Theorem \ref{rearangements} we immediately have the corollary.

\begin{cor}\label{L11} For $a\in\Mtilde$, 
$$\int_0^\infty\mu_t(T,x)dt = \int_0^\infty \mu_t(a)\mu_t(x)dt,$$
i.e. 
$$\int_0^\infty\mu_t(a,x)dt = \tau_x(a)$$
\end{cor}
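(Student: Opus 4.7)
The plan is to appeal to the standard rearrangement-invariance of integration, which says that a non-negative function and its decreasing rearrangement (with respect to a given measure) are equimeasurable, hence have the same integral. More precisely, if $f \in L^0([0,\infty),\nu)$ is non-negative and $\mu(f,\nu)$ denotes its decreasing rearrangement with respect to $\nu$ (taken as a function on $[0,\infty)$ with Lebesgue measure $m$), then
\[
\int_0^\infty f(s)\,d\nu(s) = \int_0^\infty \mu_t(f,\nu)\,dt.
\]
This is a classical property of decreasing rearrangements; see for instance \cite[Chapter 2]{BS} which is already cited in the paper.

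Applying this identity to $f = \mu(a)$, which is non-negative and lies in $L^0([0,\infty),\nu)$, and invoking Theorem \ref{rearangements} to identify $\mu(\mu(a),\nu) = \mu(a,x)$, I obtain
\[
\int_0^\infty \mu_t(a,x)\,dt \;=\; \int_0^\infty \mu_s(a)\,d\nu(s) \;=\; \int_0^\infty \mu_s(a)\mu_s(x)\,ds.
\]
The right-hand side is exactly $\tau_x(a)$ by the definition of the functional $\tau_x$ given at the start of Section 2, which gives the second displayed equality.

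There is no real obstacle here; the only point to be careful about is that the equimeasurability statement is formulated for the decreasing rearrangement with respect to $\nu$ in the sense of \cite[Definition 2.1.5]{BS}, which is precisely the sense used in Theorem \ref{rearangements}. If one preferred a self-contained derivation, the same identity follows directly from the layer-cake formula together with the distribution-function computation already carried out in the proof of Theorem \ref{rearangements}: one writes $\int_0^\infty \mu_t(a,x)\,dt = \int_0^\infty d_s(a,x)\,ds$ on the one hand, and $\int_0^\infty \mu_s(a)\mu_s(x)\,ds = \int_0^\infty \nu(\{s : \mu_s(a) > t\})\,dt = \int_0^\infty d_t(\mu(a),\nu)\,dt$ on the other, and uses the equality $d_t(\mu(a),\nu) = d_t(a,x)$ established in the proof of Theorem \ref{rearangements}.
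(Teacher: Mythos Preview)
Your proposal is correct and follows essentially the same route as the paper: both combine Theorem \ref{rearangements} with the classical identity $\int f\,d\nu = \int \mu(f,\nu)\,dm$ from \cite[Chapter~2, Proposition~1.8]{BS}. The one detail the paper supplies that you omit is the explicit check that $\nu$ is $\sigma$-finite (since $x\in L^1(\Mtilde)+\M$ forces $\int_0^n\mu_t(x)\,dt<\infty$ for every $n$), this being the standing hypothesis in \cite{BS}; your layer-cake alternative, however, bypasses that requirement altogether.
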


\begin{proof} Since $x\in L^1(\Mtilde) + \M$, we have that $\int_0^n\mu_t(x)dt<\infty$ for all $n\in\mathbb{N}$. It follows that $\nu$ is a $\sigma$-finite measure. The corollary then follows from Theorem \ref{rearangements} and \cite[Chapter 2, Proposition 1.8]{BS} with $p=1$.
\end{proof}

Corollary \ref{L11} can also be proved following the conventional approach as was done in \cite[Proposition 2.7]{FK}, albeit with modifications where necessary. 

\section{Weighted non-commutative Banach function spaces}\label{banach fn}

For the sake of the reader we recall the definition of a weighted non-commutative Banach function space.

\begin{definition}\label{WBFS}\emph{\cite[Definition 3.6]{LM} Let $x\in L^1(\Mtilde,\tau)+\M$, and let $\rho$ be a rearrangement-invariant Banach function norm on $L^0((0,\infty),\mu_t(x)dt)$.  Then the weighted non-commutative Banach function space is defined as $L_x^\rho(\Mtilde,\tau) = \{a\in\Mtilde:\mu(a)\in L^\rho((0,\infty),\mu_t(x)dt)\}$.}

\end{definition}

As was mentioned previously, in \cite[Theorem 3.7]{LM} it was assumed that $x\in L^1(\Mtilde)$. In our broader context, where $x\in L^1(\Mtilde)+\M$, some minor modifications are necessary to the proof given in \cite{LM}. First we need to point out that the function 
$$F_x(t) = \int_0^t\mu_s(x)ds$$
is continuous and strictly increasing on $[0,t_x)$, where $t_x=\inf\{s>0:\mu_s(x)=0\}$, and constant on $[t_x,\infty)$ (when $t_x<\infty$). The modification is that in our context we have that $F_x$ is a homeomorphism from $[0,t_x)$ to $[0,\tau(x))$, where it is now possible that $\tau(x)=\infty$.

We also need the measure $\nu$ to be non-atomic. As pointed out in \cite{LM}, $\nu$ is mutually absolutely continuous to the Lebesgue measure $m$. To show that $\nu$ is non-atomic let $E$ be a Borel set. We now need to consider the two possibilities $\nu(E)<\infty$ and $\nu(E)=\infty$. If $\nu(E)<\infty$, we argue exactly as in \cite{LM} to find a measurable subset $F$ of $E$ with $0<\nu(F)<\nu(E)$. If $\nu(E)=\infty$, we choose an interval $[0,a]$ such that for $F=E\cap [0,a]$, $\nu(F)>0$. Then $0<\nu(F) = \int_0^a \chi_E\mu_t(x)dt<\infty$ and therefore $0<\nu(F)<\nu(E)$.

In light of this, we can now state \cite[Theorem 3.7]{LM} in our context.

\begin{theorem}\label{WBFS2}\cite[Theorem 3.7]{LM} Let $x\in L^1(\Mtilde,\tau)+\M$, and let $\rho$ be a rearrangement-invariant Banach function norm on $L^0((0,\infty),\mu_t(x)dt)$ that satisfies the following conditions for any measurable subset $E$ of $(0,\infty)$ and $f,f_n\in L^0((0,\infty),\mu_t(x)dt)^+$:
\begin{enumerate}
\item if $0\leq f_n\uparrow f$ $\nu$-a.e. then $\rho(f_n)\uparrow\rho(f)$
\item if $\int_E\mu_t(x)dt<\infty$, then $\rho(\chi_E)<\infty$
\item if  $\int_E\mu_t(x)dt<\infty$, then  $\int_Ef(t)\mu_t(x)dt\leq C_E\rho(f)$.
\end{enumerate} for some $C_E>0$ dependent on $E$ and $\rho$, but independent of $f$. Then $L^\rho_x(\Mtilde)$ is a linear space and $\Vert \cdot\Vert_\rho:a\mapsto \rho(a)$ is a norm on $L^\rho_x(\Mtilde)$. Equipped with the norm $\Vert \cdot\Vert_\rho$ the space $L^\rho_x(\Mtilde)$ is a Banach space that injects continuously into $\M$.
\end{theorem}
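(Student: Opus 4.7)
The plan is to follow the argument of \cite[Theorem 3.7]{LM}, adapted to the broader hypothesis $x \in L^1(\Mtilde, \tau) + \M$. The key technical device is Theorem \ref{rearangements}: because $\mu(a, x)$ is the $\nu$-decreasing rearrangement of $\mu(a)$ and $\rho$ is rearrangement-invariant, we have $\Vert a\Vert_\rho = \rho(\mu(a)) = \rho(\mu(a, x))$ via the Luxemburg representation of $\rho$. This converts every norm estimate into an estimate on the non-increasing, right-continuous function $\mu(\cdot, x)$, whose subadditivity and homogeneity are already recorded in Lemma \ref{list}.

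For the linear space and seminorm properties, homogeneity is immediate from Lemma \ref{list}(2). For the triangle inequality, I would combine the subadditivity $\mu_{t+s}(a+b,x) \leq \mu_t(a,x) + \mu_s(b,x)$ of Lemma \ref{list}(3) with a standard argument to derive
$$\int_0^t \mu_s(a+b,x)\,ds \;\leq\; \int_0^t \mu_s(a,x)\,ds + \int_0^t \mu_s(b,x)\,ds \qquad (t > 0),$$
and then invoke the fact that a rearrangement-invariant Banach function norm respects the Hardy--Littlewood--P\'olya preorder on non-increasing functions to conclude $\rho(\mu(a+b,x)) \leq \rho(\mu(a,x)) + \rho(\mu(b,x))$. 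Faithfulness follows from condition (3): $\rho(\mu(a)) = 0$ forces $\int_E \mu_t(a)\mu_t(x)\,dt = 0$ for every $E$ with $\nu(E) < \infty$; choosing $E = (0, s)$ with $s < t_x$ (which is possible since $x \neq 0$ gives $t_x > 0$) yields $\mu_t(a) = 0$ $\nu$-a.e., hence $\mu(a) \equiv 0$ by monotonicity and right-continuity, so $a = 0$.

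For completeness, I would use the standard telescoping argument: given a Cauchy sequence, extract a subsequence $\{a_n\}$ with $\Vert a_{n+1} - a_n\Vert_\rho \leq 2^{-n}$. The continuous injection into $\Mtilde$ (treated next) yields convergence in $\tau$-measure to some $a \in \Mtilde$, and the Fatou-type condition (1) applied to the partial sums of $\sum_n \vert a_{n+1}-a_n\vert$ delivers both $a \in L^\rho_x(\Mtilde)$ and $\Vert a - a_n\Vert_\rho \to 0$. For the continuous injection (reading $\M$ in the statement as $\Mtilde$), fix $\delta > 0$ with $0 < \int_0^\delta \mu_t(x)\,dt < \infty$; applying condition (3) to $E = (0,\delta)$ and using the monotonicity bound $\mu_\delta(a)\int_0^\delta \mu_t(x)\,dt \leq \int_0^\delta \mu_t(a)\mu_t(x)\,dt \leq C_E \Vert a\Vert_\rho$ produces a uniform control of $\mu_\delta(a)$ by $\Vert a\Vert_\rho$, which is exactly continuity into $\Mtilde$ with its measure topology.

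The principal obstacle is the case $x \notin L^1(\Mtilde, \tau)$, where $\tau(x)$ may be infinite and $\nu$ is only $\sigma$-finite, so that $F_x$ may map onto an unbounded interval. Non-atomicity of $\nu$ for sets of infinite $\nu$-measure and the existence of the measure-preserving representation underlying the rearrangement identification require the separate verification already carried out in the paragraphs preceding the theorem; once those modifications are in place, the remaining arguments proceed as in \cite{LM}.
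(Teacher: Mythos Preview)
The paper does not supply an independent proof of this theorem: it is stated as \cite[Theorem 3.7]{LM} transplanted to the broader hypothesis $x\in L^1(\Mtilde,\tau)+\M$, and the only argumentative content the paper adds is contained in the two paragraphs \emph{preceding} the statement, namely (i) that $F_x$ is now a homeomorphism onto $[0,\tau(x))$ with $\tau(x)$ possibly infinite, and (ii) that non-atomicity of $\nu$ must also be checked for sets with $\nu(E)=\infty$. Your final paragraph correctly isolates exactly these two points and defers to \cite{LM} for the rest, so at that level your proposal matches the paper.

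Where you diverge is in the body of the sketch: rather than working with $\mu(a)\in L^0((0,\infty),\nu)$ directly (as \cite{LM} does), you pass through the Luxemburg representation and the identity $\rho(\mu(a))=\bar\rho(\mu(a,x))$ from Theorem~\ref{rearangements}, and then run the linearity, triangle-inequality, faithfulness, completeness and injection arguments on the weighted rearrangement $\mu(\cdot,x)$ using Lemma~\ref{list}. This is essentially the mechanism the paper deploys \emph{later}, in Theorem~\ref{equiv}, to identify $L^\rho_x(\Mtilde)$ with $L^{\bar\rho}(\Mtilde,\tau_x)$; you are in effect previewing that identification to reprove the theorem. The approach is sound (resonance of $\nu$ is available once the preliminary non-atomicity check is done, so the Luxemburg representation is legitimate), and it has the advantage of making the role of $\mu(\cdot,x)$ explicit; the cost is that it front-loads machinery the paper intentionally postpones, and it slightly blurs the logical order, since in the paper Theorem~\ref{equiv} cites the present theorem rather than the other way around.
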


We propose the following alternative definition for a weighted space. Notice that if $\tau_x$ is a trace, Definition \ref{weightedspace} is the standard definition of a non-commutative Banach function space.

\begin{definition}\label{weightedspace}\emph{ Let $x\in L^1(\Mtilde)+\M$ and $\rho$ a Banach function function norm on $L^0([0,\infty))$ that satisfies the following conditions for any measurable subset $E$ of $(0,\infty)$ and $f,f_n\in L^0([0,\infty))^+$,
\begin{enumerate}
\item if $0\leq f_n\uparrow f$ $m$-a.e. then $\rho(f_n)\uparrow\rho(f)$
\item if $\int_Edt<\infty$ then $\rho(\chi_E)<\infty$
\item if $\int_Edt<\infty$ then $\int_Ef(t)dt\leq C_E\rho(f)$.
\end{enumerate} for some $C_E>0$ dependent on $E$ and $\rho$, but independent of $f$. The space $L^\rho(\Mtilde,\tau_x)$ is the space of all $a\in\Mtilde$ such that $\mu(a,x)\in L^\rho([0,\infty))$.}
\end{definition}

We are not yet claiming that $L^\rho(\Mtilde,\tau_x)$ is a Banach space. It is possible, after some preliminary justifications, to show this fact using \cite[Theorem 5.1.19]{BS}, however, we will be following a different route. We also want to show the relationship our spaces have with those defined in \cite{LM}. As it happens, in doing so, we also discover, as a corollary, that the spaces $L^\rho(\Mtilde,\tau_x)$ are Banach spaces.

\begin{theorem}\label{equiv} Let $L^\rho_x(\Mtilde)$ be a weighted non-commutative Banach function space. Then there exists a rearrangement invariant Banach function norm $\bar{\rho}$ in the sense of \cite{BS} on $L^0([0,\infty))$ such that $L^\rho_x(\Mtilde) = L^{\bar{\rho}}(\Mtilde,\tau_x)$. 

Conversely, for the space $L^\rho(\Mtilde,\tau_x)$, there exists a weighted non-commutative Banach function space $L^{\bar{\rho}}_x(\Mtilde)$ such that $L^\rho(\Mtilde,\tau_x) = L^{\bar{\rho}}_x(\Mtilde)$.
\end{theorem}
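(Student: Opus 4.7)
The plan is to invoke Theorem \ref{rearangements} together with Luxemburg's representation theorem for rearrangement-invariant Banach function norms (\cite[Chapter 2, Theorem 4.10]{BS}). The discussion preceding Theorem \ref{WBFS2} already shows that $\nu$ is $\sigma$-finite and non-atomic on $(0,\infty)$, so the measure space $((0,\infty), \nu)$ is resonant and Luxemburg's theorem is available in both directions. The key computation is that for any $a \in \Mtilde$, Theorem \ref{rearangements} identifies $\mu(a,x)$ with $\mu(\mu(a), \nu)$, the decreasing rearrangement of $\mu(a)$ with respect to $\nu$.

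For the forward direction, start with the rearrangement-invariant Banach function norm $\rho$ on $L^0((0,\infty), \nu)$. Luxemburg's theorem produces a rearrangement-invariant Banach function norm $\bar{\rho}$ on $L^0([0, \nu((0,\infty))), m)$, extended trivially to $[0, \infty)$ when $\tau(x) < \infty$, satisfying $\rho(f) = \bar{\rho}(\mu(f, \nu))$ for every non-negative $f$. Consequently
\[
\rho(\mu(a)) = \bar{\rho}(\mu(\mu(a), \nu)) = \bar{\rho}(\mu(a,x))
\]
for every $a \in \Mtilde$, so $L^\rho_x(\Mtilde) = L^{\bar{\rho}}(\Mtilde, \tau_x)$ with equal norms. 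Conditions (1)--(3) of Definition \ref{weightedspace} for $\bar{\rho}$ transfer routinely from their counterparts in Theorem \ref{WBFS2} for $\rho$ via the change of variable induced by the homeomorphism $F_x \colon [0, t_x) \to [0, \tau(x))$ from the paragraph preceding Theorem \ref{WBFS2}.

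For the converse, take $\rho$ as in Definition \ref{weightedspace}; since $\mu(a,x)$ is always decreasing, the space $L^\rho(\Mtilde, \tau_x)$ depends only on the action of $\rho$ on decreasing functions, so we may assume $\rho$ is rearrangement-invariant. Define $\bar{\rho}$ on $L^0((0,\infty), \nu)$ by $\bar{\rho}(f) := \rho(\mu(f, \nu))$. Rearrangement invariance of $\rho$ together with the Hardy--Littlewood submajorisation $\mu(f+g, \nu) \prec\!\prec \mu(f, \nu) + \mu(g, \nu)$ yields the triangle inequality for $\bar{\rho}$; the remaining Banach function norm axioms and the conditions (1)--(3) of Theorem \ref{WBFS2} are inherited similarly. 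Theorem \ref{rearangements} once more gives $\bar{\rho}(\mu(a)) = \rho(\mu(a,x))$, so $L^{\bar{\rho}}_x(\Mtilde) = L^\rho(\Mtilde, \tau_x)$.

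The main obstacle will be the careful verification that the three auxiliary conditions on $\rho$ are preserved across the Luxemburg correspondence, particularly in the case $\tau(x) < \infty$ where $\nu$ is a finite measure. The trivial extension of $\bar{\rho}$ to $[\tau(x), \infty)$, and the fact that $\bar{\rho}$ is determined solely by behaviour on the $\nu$-support $[0, t_x)$ when $t_x < \infty$, both require some care, but are controlled via the homeomorphism $F_x$.
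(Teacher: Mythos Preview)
Your proposal is correct and follows essentially the same route as the paper: the forward direction via the Luxemburg representation theorem \cite[Theorem 2.4.10]{BS} combined with Theorem \ref{rearangements}, and the converse by setting $\bar\rho(f)=\rho(\mu(f,\nu))$ and verifying the Banach function norm axioms, with subadditivity coming from submajorisation. The paper invokes the slightly sharper relation $|\mu(f+g,\nu)-\mu(g,\nu)|\prec\prec\mu(f,\nu)$ from \cite[Theorem 3.4]{DDdP1} together with \cite[Theorem 2.4.6]{BS}, and it checks conditions (1)--(3) of Theorem \ref{WBFS2} explicitly rather than appealing to the homeomorphism $F_x$; you are, on the other hand, more careful than the paper about the finite-measure case $\tau(x)<\infty$ and about the implicit rearrangement-invariance needed for $\rho$ in the converse, points the paper passes over silently.
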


\begin{proof} Let $\rho$ be a Banach function norm over $L^0([0,\infty),\nu)$. We have that $\rho$ is rearrangement invariant and that $\nu$ is nonatomic from the proof of \cite[Theorem 3.7]{LM}, and therefore $L^0([0,\infty),\nu)$ is a resonant measure space. It then follows from the Luxemburg representation theorem \cite[Theorem 2.4.10]{BS} that there exists a rearrangement invariant Banach function norm $\bar{\rho}$ over $L^0([0,\infty))$ such that for all $f\in L^0([0,\infty),\nu)$
$$\rho(f) = \bar{\rho}(\mu(f,\nu)),$$

where $\mu(f,\nu)$ is the decreasing rearrangement of $f$ with respect to the measure $\nu$.

Since for all $a\in\Mtilde$ it was shown that $\mu(a,x)$ is the decreasing rearrangement of $\mu(a)$ with respect to $\nu$, it follows that
$$\rho(\mu(a)) = \bar{\rho}(\mu(a,x)).$$

Therefore $a\in L^\rho_x(\Mtilde)$ if and only if $a\in L^{\bar{\rho}}(\Mtilde,\tau_x)$, and we also have that $\Vert a\Vert_\rho = \Vert a\Vert_{\bar{\rho}}$. 

Given a Banach function norm $\rho$ on $L^0([0,\infty))$ (in the sense of \cite{BS}), the second part follows from setting $\bar{\rho}(f) = \rho(\mu(f,\nu))$ for all $f\in L^0([0,\infty),\nu)^+$. We must show that $\bar{\rho}$ is a rearrangement invariant Banach function norm on $L^0([0,\infty),\nu)$ in the sense of \cite{BS}. Now for $0\leq f\in L^0([0,\infty),\nu)$ we have that $f=0$ $\nu$-a.e. if and only if $\mu(f,\nu)=0$ $m$-a.e. if and only if $\bar\rho(f)=\rho(\mu(f,\nu))=0$. 

Let $0\leq g\leq f$ $\nu$-a.e. in $L^0([0,\infty),\nu)$. Then $\mu(g,\nu)\leq \mu(f,\nu)$ $m$-a.e. and therefore $\bar{\rho}(g)=\rho(\mu(g,\nu))\leq \rho(\mu(f,\nu))=\bar{\rho}(f)$. 

Suppose we have that $0\leq f_n\uparrow f$ $\nu$-a.e., then $\mu(f_n,\nu)\uparrow\mu(f,\nu)$ by \cite[Proposition 2.1.7]{BS} and therefore $\bar{\rho}(f_n)=\rho(\mu(f_n,\nu))\uparrow \rho(\mu(f,\nu)) = \bar{\rho}(f)$. 

Now suppose we have a Borel set $E\subset [0,\infty)$ with $\nu(E)<\infty$. We want to show that $\bar{\rho}(\chi_E)<\infty$. But $\bar{\rho}(\chi_E)=\rho(\mu(\chi_E))=\rho(\chi_{[0,\nu(E))})<\infty$ since $\nu(E)<\infty$ and therefore $\int_{[0,\nu(E))}dt<\infty$. Furthermore we have that there exists $C_E>0$, dependent on $[0,\nu(E))$ and $\rho$, such that $\int_0^{\nu(E)}\mu(f,\nu)dt \leq C_E\rho(\mu(f,\nu))$ for all $0\leq f\in L^0([0,\infty),\nu)$. Then it follows from \cite[Theorem 2.2.2]{BS}, with $g=\chi_E$, that $\int_E fd\nu\leq \int_0^{\nu(E)}\mu_t(f,\nu)dt\leq C_E\bar{\rho}(f)$.

We also need to show that $\bar{\rho}$ is subadditive. Let $f,g\in L^0([0,\infty),\nu)$. Then if we consider the commutative von Neumann algebra $L^\infty([0,\infty),\nu)$, we can conclude from \cite[Theorem 3.4]{DDdP1} that $\vert\mu(f+g,\nu)-\mu(g,\nu)\vert \prec\prec \mu(f,\nu)$. Then it is routine to show that $\rho(\mu(f+g,\nu))\leq \rho(\mu(f,\nu)) + \rho(\mu(g,\nu))$, i.e. $\bar{\rho}(f+g)\leq \bar{\rho}(f) + \bar{\rho}(g)$ (see \cite[Theorem 2.4.6]{BS}). So we have that $\bar{\rho}$ is a Banach function norm satisfying the conditions in Theorem \ref{WBFS2}.

Let $f,g\in L^0([0,\infty),\nu)$ such that $\mu(f,\nu) = \mu(g,\nu)$. Then since $\bar{\rho}(f) = \rho(\mu(f,\nu))=\rho(\mu(g,\nu))=\bar{\rho}(g)$ we have that $\bar{\rho}$ is rearrangement invariant.  

For all $a\in\Mtilde$ it follows from Theorem \ref{rearangements} that $\bar{\rho}(a) = \bar{\rho}(\mu(a))=\rho(\mu(a,x))$, and therefore $L^\rho(\Mtilde,\tau_x) = L^{\bar{\rho}}_x(\Mtilde)$.
\end{proof}

It should be noted that, strictly speaking, the equality in the above theorem is set equality. However, that $L^\rho(\Mtilde,\tau_x)$ is a Banach space follows as a corollary. 

\begin{cor} Let $\rho$ be a Banach function norm satisfying the conditions in Definition \ref{weightedspace}. The space $L^\rho(\Mtilde,\tau_x)$ is a Banach space with respect to the norm $a\mapsto\rho(a)$ for all $a\in L^\rho(\Mtilde,\tau_x)$ that injects continuously into $\Mtilde$.
\end{cor}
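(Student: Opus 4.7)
My plan is to derive the corollary directly from Theorem \ref{equiv} together with Theorem \ref{WBFS2}, observing that the identification in Theorem \ref{equiv} is actually isometric, not merely a set equality. Concretely, given $\rho$ satisfying the conditions in Definition \ref{weightedspace}, I would invoke the second half of Theorem \ref{equiv} to obtain the rearrangement-invariant Banach function norm $\bar{\rho}(f) = \rho(\mu(f,\nu))$ on $L^0([0,\infty),\nu)$ and the set equality $L^\rho(\Mtilde,\tau_x) = L^{\bar{\rho}}_x(\Mtilde)$.

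The first step in the argument proper is to confirm that $\bar{\rho}$ satisfies the three hypotheses required by Theorem \ref{WBFS2}. These verifications were carried out inside the proof of Theorem \ref{equiv} (the monotone convergence clause via \cite[Proposition 2.1.7]{BS}, finiteness on characteristic functions of finite $\nu$-measure sets, and the local $L^1$-domination via \cite[Theorem 2.2.2]{BS}, together with subadditivity through submajorisation), so this amounts to pulling those facts together and noting that $\bar{\rho}$ is rearrangement invariant. With those verified, Theorem \ref{WBFS2} gives that $L^{\bar{\rho}}_x(\Mtilde)$ is a Banach space continuously injecting into $\M$, and hence into $\Mtilde$.

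The second step is to transfer this structure to $L^\rho(\Mtilde,\tau_x)$. The crucial observation is that for any $a\in\Mtilde$ one has $\bar{\rho}(\mu(a)) = \rho(\mu(\mu(a),\nu)) = \rho(\mu(a,x))$ by Theorem \ref{rearangements}, so the set identification from Theorem \ref{equiv} is in fact an isometry between $\bigl(L^\rho(\Mtilde,\tau_x),\rho(\mu(\cdot,x))\bigr)$ and $\bigl(L^{\bar{\rho}}_x(\Mtilde),\bar{\rho}(\mu(\cdot))\bigr)$. Completeness and the continuous embedding into $\Mtilde$ therefore transfer immediately.

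There is no real analytic obstacle here; the entire argument is a bookkeeping exercise. The only point that demands care is ensuring that the norm appearing in the corollary — which is written as $a\mapsto\rho(a)$ but is really $a\mapsto\rho(\mu(a,x))$ per Definition \ref{weightedspace} — is the same as the norm inherited from $L^{\bar{\rho}}_x(\Mtilde)$, and this is precisely the content of the identity $\rho(\mu(a,x)) = \bar{\rho}(\mu(a))$ used above. Once that is acknowledged, the corollary is immediate.
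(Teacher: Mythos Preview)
Your proposal is correct and follows essentially the same route as the paper: invoke the second half of Theorem \ref{equiv} to produce $\bar{\rho}$ with $L^\rho(\Mtilde,\tau_x)=L^{\bar{\rho}}_x(\Mtilde)$, observe via Theorem \ref{rearangements} that $\rho(\mu(a,x))=\bar{\rho}(\mu(a))$ so the identification is isometric, and then inherit the Banach space structure and continuous embedding from $L^{\bar{\rho}}_x(\Mtilde)$ through Theorem \ref{WBFS2}. Your write-up is in fact a little more explicit than the paper's about where the hypotheses of Theorem \ref{WBFS2} were verified, but the argument is the same.
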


\begin{proof} To see that $a\mapsto\rho(a)$ is a norm, let $\bar{\rho}$ be the Banach function norm for which $L^\rho(\Mtilde,\tau_x)=L^{\bar{\rho}}_x(\Mtilde)$ setwise. 
Then for all $a\in L^\rho(\Mtilde,\tau_x)$, we have that $\rho(a) = \bar{\rho}(a)$. Since $\bar{\rho}$ acts as a norm on $L^{\bar{\rho}}_x(\Mtilde)$, it follows that $\rho$ acts as a norm on $L^\rho(\Mtilde,\tau_x)$. Furthermore we know that $L^{\bar{\rho}}_x(\Mtilde)$ is a Banach space with respect to the norm $a\mapsto\bar{\rho}(a)$ that injects continuously into $\Mtilde$, and hence the corollary follows.  
\end{proof}

\section{Weighted non-commutative Orlicz spaces}

Recall that an Orlicz function is a convex function $\psi:[0,\infty)\mapsto [0,\infty]$ satisfying $\psi(0)$ and $\lim_{t\mapsto\infty}\psi(t)=\infty$ that is neither identically zero, nor infinitely valued on all of $(0,\infty)$. In addition $\psi$ must be left continuous at $b_\psi=\sup\{t>0:\psi(t)<\infty\}$. Given an Orlicz function $\psi$ and a measure space $(X,\Sigma,m)$, the Orlicz space $L^\psi(X,\Sigma,m)$ is defined as 
$$L^\psi(X,\Sigma,m)=\{f\in L^0(X,\Sigma,m):\psi(\lambda\vert f\vert)\in L^1(X,\Sigma,m) \text{ for some } \lambda>0\}.$$ 
For a given Orlicz space $L^\psi(X,\Sigma,m)$, the Luxemburg-Nakano norm is given by 
$$\Vert f\Vert_\psi = \inf\{\lambda>0:\Vert\psi\big(\tfrac{\vert f\vert}{\lambda}\big)\Vert_1\leq \}.$$
$L^\psi(X,\Sigma,m)$ equipped with the Luxemburg-Nakano is a Banach function space. 

Given an Orlicz function $\psi$, we can construct the following three types of non-commutative Orlicz spaces:
\begin{enumerate}
\item $L^\psi(\M)=\{a\in\M:\mu(a)\in L^\psi([0,\infty))$;
\item\label{two} $L^\psi_x(\M)=\{a\in\M:\mu(a)\in L^\psi([0,\infty),\mu_t(x)dt)$;
\item\label{three} $L^\psi(\M,\tau_x)=\{a\in\M:\mu(a,x)\in L^\psi([0,\infty))$.
\end{enumerate}

As a demonstration of the importance of the weighted Orlicz spaces $L^\psi_x(\Mtilde)$,  it was shown in \cite{LM} that the set of regular observables with respect to a state $x\in L^1(\Mtilde)$ agrees with the Orlicz space $L^{\cosh-1}_x(\Mtilde)$ (see \cite[Definition 3.5, Theorem 3.8]{LM}).

Given either a space $L_x^\rho(\Mtilde)$ or a space $L^\rho(\Mtilde,\tau_x)$, in Theorem \ref{equiv} we only showed that there exists an equivalent function norm. In the case of weighted Orlicz spaces, we can go further and precisely describe the equivalent function norm that would generate the same space.

For this we will need the following lemma (Compare \cite[Lemma 2.1]{LM}).

\begin{lemma} Let $\psi$ be an Orlicz function and $a\in \Mtilde$ such that $\psi(\vert a\vert)\in\Mtilde$. Set $\psi(\infty)=\infty$ to extend $\psi$ to a function on $[0,\infty]$. Then $\psi(\mu_t(a,x)) = \mu_t(\psi(\vert a\vert),x)$.
\end{lemma}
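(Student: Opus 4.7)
The plan is to reduce this identity to the commutative analogue on $([0,\infty),\nu)$ by invoking Theorem \ref{rearangements} twice, and then to use the classical Fack--Kosaki identity $\mu_s(\psi(|a|)) = \psi(\mu_s(|a|))$ from the tracial setting to transfer information between the singular value functions of $a$ and $\psi(|a|)$.

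More concretely, I first note that convexity of $\psi$ together with $\psi(0)=0$ forces $\psi$ to be non-decreasing on $[0,\infty)$, with continuity on $(0,b_\psi)$ guaranteed by convexity and left-continuity at $b_\psi$ given by hypothesis. Next, by the tracial (unweighted) version of this lemma, the analogue of \cite[Lemma 2.1]{LM} read off for $\tau$ in place of $\tau_x$, one has $\mu_s(\psi(|a|)) = \psi(\mu_s(|a|)) = \psi(\mu_s(a))$ for all $s>0$, so that the singular value function of $\psi(|a|)$ equals the composition $\psi\circ\mu(a)$. Applying Theorem \ref{rearangements} now to both sides of the desired identity, the left-hand side $\psi(\mu_t(a,x))$ becomes $\psi$ applied to the decreasing $\nu$-rearrangement of the function $s\mapsto\mu_s(a)$, while the right-hand side $\mu_t(\psi(|a|),x)$ becomes the decreasing $\nu$-rearrangement of $\mu(\psi(|a|)) = \psi\circ\mu(a)$. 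The whole statement therefore reduces to the purely commutative claim that, for the non-negative decreasing function $f=\mu(a)\in L^0([0,\infty),\nu)$ and the non-decreasing $\psi$ just described,
$$
\mu_t(\psi\circ f,\nu) \;=\; \psi\bigl(\mu_t(f,\nu)\bigr),
$$
where $\mu(\,\cdot\,,\nu)$ denotes decreasing rearrangement with respect to $\nu$.

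This commutative identity I would verify by the standard distribution-function computation: for $\lambda\in[0,\psi(b_\psi))$ one has $\{s:\psi(f(s))>\lambda\}=\{s:f(s)>\psi^{-1}(\lambda)\}$, where $\psi^{-1}(\lambda)=\inf\{u\geq 0:\psi(u)>\lambda\}$ is the right-continuous generalized inverse. This gives $d_\lambda(\psi\circ f,\nu) = d_{\psi^{-1}(\lambda)}(f,\nu)$, and the rearrangement identity then drops out of the infimum formula of Proposition \ref{muinf} after substituting $u=\psi^{-1}(\lambda)$, using the continuity of $\psi$ on $(0,b_\psi)$ and its left-continuity at $b_\psi$ to identify the infimum with the value of $\psi$ at $\mu_t(f,\nu)$.

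The main obstacle will be handling the boundary behaviour cleanly: plateaus of $\psi$ on which the generalized inverse is set-valued, the possible jump of $\psi$ at $b_\psi$, and reconciling the convention $\psi(\infty)=\infty$ with the possibility that $\mu_t(a,x)$ equals $\infty$ on an interval of $t$. The hypothesis $\psi(|a|)\in\Mtilde$ keeps $\mu_t(\psi(|a|))$ finite for every $t>0$, which in turn constrains these degenerate cases and lets the change of variables in the infimum formula go through; otherwise the argument is a direct translation of the Fack--Kosaki proof with $\tau_x$ replacing $\tau$ throughout.
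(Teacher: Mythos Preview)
Your argument is correct but follows a different route from the paper. The paper works directly with Definition~\ref{wdr}: it restricts to the commutative von Neumann subalgebra generated by the spectral projections of $|a|$ (as in \cite[Remark 2.3.1]{FK}), invokes the functional-calculus identity $\psi(\Vert\,|a|e\Vert)=\Vert\psi(|a|)e\Vert$ from the proof of \cite[Lemma 2.1]{LM}, and then pulls $\psi$ through the infimum $\inf\{\Vert\,|a|e\Vert:\tau_x(1-e)\leq t\}$ using monotonicity and continuity of $\psi$. This is a four-line transplant of the tracial proof with $\tau_x$ in place of $\tau$.

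Your approach instead factors the statement through Theorem~\ref{rearangements}: you first quote the tracial identity $\mu_s(\psi(|a|))=\psi(\mu_s(a))$, then apply Theorem~\ref{rearangements} to both sides to reduce everything to the classical rearrangement fact $\mu_t(\psi\circ f,\nu)=\psi(\mu_t(f,\nu))$ for non-decreasing left-continuous $\psi$ on $L^0([0,\infty),\nu)$. This is a perfectly valid and self-contained reduction; it is slightly longer but has the merit of exhibiting the lemma as a formal consequence of Theorem~\ref{rearangements} together with the already-known unweighted case, reinforcing the transfer principle that runs through the paper. The boundary issues you flag (plateaus, the jump at $b_\psi$, the convention $\psi(\infty)=\infty$) are the same ones implicitly handled in the paper's step of interchanging $\psi$ with the infimum, so neither approach avoids that bookkeeping.
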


\begin{proof} As was noted by \cite[Remark 2.3.1]{FK} in their context, the proof of Lemma \ref{muinf} easily adapts to show that when computing weighted decreasing rearrangements according to the prescription given in Definition \ref{wdr}, we may restrict our attention to the commutative von Neumann algebra generated by the spectral projections of $\vert a\vert$. 

From the Borel functional calculus and the proof of \cite[Lemma 2.1]{LM}, it is known that $\psi(\Vert \vert a\vert e\Vert)=\Vert \psi(\vert a\vert)e\Vert$, where values of infinity are allowed. Therefore
\begin{align*}
\mu_t(\psi(a),x)	& =  \inf\{\Vert \psi(\vert a\vert)e\Vert:e\in\M_P, \tau_x(1-e)\leq t\}
\\					& =  \inf\{\psi(\Vert \vert a\vert e\Vert):e\in\M_P, \tau_x(1-e)\leq t\}
\\					& =  \psi(\inf\{\Vert \vert a\vert e\Vert:e\in\M_P, \tau_x(1-e)\leq t\})
\\					& =  \psi(\mu_t(a,x)). 
\end{align*}
\end{proof}

\begin{theorem}\label{Orlicz} Let $\psi$ be an Orlicz function. Then $L_x^\psi(\Mtilde) = L^\psi(\Mtilde,\tau_x)$.
\end{theorem}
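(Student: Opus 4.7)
The plan is to unfold both definitions to their Luxemburg--Nakano criteria and then to collapse the two defining integrals onto each other using the preceding lemma together with Corollary \ref{L11}. Concretely, membership of $a$ in $L_x^\psi(\Mtilde)$ is equivalent to the existence of some $\lambda>0$ with $\int_0^\infty\psi(\lambda\mu_t(a))\mu_t(x)\,dt<\infty$, while membership in $L^\psi(\Mtilde,\tau_x)$ is equivalent to the existence of some $\lambda>0$ with $\int_0^\infty\psi(\lambda\mu_t(a,x))\,dt<\infty$. So it suffices to show that for every fixed $\lambda>0$ these two integrals are equal; the same argument will then also give equality of the Luxemburg--Nakano norms, upgrading the set-theoretic equality to an isometric one (and thereby refining the ``existence'' statement of Theorem \ref{equiv}).

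To identify the two integrals, I would first replace $a$ by $\lambda a$, using homogeneity $\mu_t(\lambda a,x)=\lambda\mu_t(a,x)$ and $\mu_t(\lambda a)=\lambda\mu_t(a)$ from Lemma \ref{list}(2) and its unweighted analogue, so that the factor $\lambda$ can be absorbed. Setting $b=\psi(|\lambda a|)$, the preceding lemma yields
\[
\psi(\lambda\mu_t(a,x))=\mu_t(b,x),
\]
and Corollary \ref{L11} applied to $b$ gives
\[
\int_0^\infty\mu_t(b,x)\,dt=\int_0^\infty\mu_t(b)\,\mu_t(x)\,dt.
\]
Finally, the tracial version of the preceding lemma (i.e.\ the classical identity $\mu_t(\psi(|c|))=\psi(\mu_t(c))$ from \cite{FK}) reads $\mu_t(b)=\psi(\lambda\mu_t(a))$. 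Stringing the three equalities together produces
\[
\int_0^\infty\psi(\lambda\mu_t(a,x))\,dt=\int_0^\infty\psi(\lambda\mu_t(a))\,\mu_t(x)\,dt,
\]
which is exactly what the proof requires.

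The main technical point to address is the hypothesis of the preceding lemma, namely that $\psi(|\lambda a|)\in\Mtilde$. This is not a serious obstacle: since $\psi$ is an Orlicz function it is continuous and strictly increasing to $\infty$ on $[0,b_\psi)$, so for any $c$ in the range of $\psi$ one has $e_{(c,\infty)}(\psi(\lambda|a|))=e_{(\psi^{-1}(c)/\lambda,\infty)}(|a|)$, and the right-hand side has finite $\tau$-measure for $c$ large enough because $a$ is $\tau$-measurable; hence $\psi(\lambda|a|)\in\Mtilde$. Once this is verified, the rest of the argument is a linear substitution, and comparing the infima defining the two Luxemburg--Nakano norms immediately yields $\Vert a\Vert_\psi^{L_x^\psi}=\Vert a\Vert_\psi^{L^\psi(\Mtilde,\tau_x)}$, completing the proof.
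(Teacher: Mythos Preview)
Your proposal is correct and follows essentially the same route as the paper: unfold both memberships to the integral conditions $\int_0^\infty\psi(\lambda\mu_t(a))\mu_t(x)\,dt<\infty$ and $\int_0^\infty\psi(\lambda\mu_t(a,x))\,dt<\infty$, and then pass between them via the chain $\psi(\lambda\mu_t(a,x))=\mu_t(\psi(\lambda|a|),x)$ (preceding lemma), Corollary \ref{L11}, and the classical identity $\mu_t(\psi(\lambda|a|))=\psi(\lambda\mu_t(a))$, with the same argument giving equality of the Luxemburg--Nakano norms. Your explicit verification that $\psi(\lambda|a|)\in\Mtilde$ is a welcome addition that the paper leaves implicit; just note that an Orlicz function need not be strictly increasing near $0$ (it can vanish on an initial interval), though your spectral-projection argument survives with the obvious adjustment.
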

\begin{proof} Recall that
$$L^\psi_x(\Mtilde) = \{a\in\Mtilde: \mu(a)\in L^\psi([0,\infty),\nu)\}.$$
Therefore $a\in L^\psi_x(\Mtilde) $ if and only if $\mu(a)\in L^\psi([0,\infty),\nu)$ if and only if there exist an $\lambda>0$ such that $\psi(\lambda\mu(a))\in L^1([0,\infty),\nu)$. So we have that
\begin{align*}
a\in L^\psi_x(\Mtilde)	& \Leftrightarrow  \text{ there exists }\lambda>0 \text{ such that } \int_0^\infty \psi(\lambda\mu_t(a))\mu_t(x)dt <\infty
\\						& \Leftrightarrow 	\text{ there exists } \lambda>0 \text{ such that } \int_0^\infty \mu_t(\psi(\lambda a))\mu_t(x)dt <\infty
\\						& \Leftrightarrow 	\text{ there exists } \lambda>0 \text{ such that } \int_0^\infty \mu_t(\psi(\lambda a),x)dt <\infty
\\						& \Leftrightarrow 	\text{ there exists } \lambda>0 \text{ such that } \int_0^\infty \psi(\lambda\mu_t(a,x))dt <\infty
\\						& \Leftrightarrow  a\in L^\psi(\Mtilde,\tau_x).
\end{align*}

Similarly for the norm of the weighted Orlicz space we have that
\begin{align*}
\Vert a \Vert_{x,\psi} 	& = \inf\{\lambda>0: \Vert\psi(\mu(a)/\lambda)\Vert_{1,\nu} \leq 1\}
\\						& = \inf\{\lambda>0: \int_0^\infty\psi(\mu(a)/\lambda)\mu_t(x)dt \leq 1\}
\\						& =  \inf\{\lambda>0: \int_0^\infty(\psi(\mu_t(a,x)/\lambda))dt \leq 1\}
\\						& =  \Vert a\Vert_{\tau_x,\psi}.
\end{align*}
\end{proof}

From Theorem \ref{Orlicz} it follows that $L^p_x(\Mtilde)=L^p(\Mtilde,\tau_x)$ for $1\leq p \leq\infty$ and that $L_x^{\cosh-1}(\Mtilde) = L_x^{\cosh-1}(\Mtilde,\tau_x)$. Recall that $L_x^{\cosh-1}(\Mtilde)$ coincides with the set of regular observables, and therefore so does $L^{\cosh-1}(\Mtilde,\tau_x)$. The equivalence of the weighted non-commutative Banach function spaces and the relationship between the weighted and non-weighted non-commutative decreasing rearrangements presented in this article provides a powerful new functional calculus with which to refine the above mentioned application.

\section*{Acknowledgements}

The author thanks Louis E. Labuschagne for guidance given and many useful discussions and also for proofreading this article.

\noindent Funding: This work was supported by the North-West University.

\end{document}